\newlength{\temp}
\newcommand{\powb}[1]{\Pow\big({#1}\big)}
\newcommand{\procLocalTrash}{\ensuremath{\mathsf{LocalTrash}}\xspace}
\definecolor{mycol}{rgb}{0.32, 0.12, 0.32}
\newcommand{\mycol}[1]{\textcolor{mycol}{#1}}
\newcommand{\xO}{x_{_{\emptyset}}}
\newcommand{\closure}{\mathsf{P}}
\newcommand{\formProc}{\mathsf{formProcess}}
\newcommand{\mycomment}[1]{\footnotesize\textcolor{dark-gray}{- \hspace{-2.3pt}- \emph{#1}}}
\newcommand{\pumpCycle}{\mathsf{pumpCycle}}
\newcommand{\pChain}{\mathcal{P\hspace{-.8pt}C}}
\newcommand{\sT}{\mid}
\definecolor{dark-gray}{gray}{0.35}
\newcommand{\sets}{\{\:\mbox{\rm sets}\:\}}
\newcommand{\Vars}[1]{\mathrm{Vars}(#1)}
\newcommand{\VarsBig}[1]{\mathrm{Vars}\Big(#1\Big)}
\newcommand{\HF}{\textnormal{\textsf{HF}}\xspace}
\newcommand{\MLS}{\textnormal{\textsf{MLS}}\xspace}
\newcommand{\MLSPFIN}{\widehat{ \MLSP }}
\newcommand{\rkk}{\hbox{\sf rk}\xspace}
\newcommand{\ZFC}{\emph{ZFC}\xspace}
\newcommand{\setTheory}{\mathcal{S}}
\definecolor{grey}{rgb}{0.9, 0.9, 0.9}
\newlength{\savefboxrule}
\title{\textbf{Two Dichotomy Theorems}}
\author{Domenico Cantone \\
\emph{Dipartimento di Matematica e Informatica, Universit\`a di Catania}\\
\emph{Viale Andrea Doria 6, I-95125 Catania, Italy.}  \\
\mbox{E-mail:} \texttt{cantone@dmi.unict.it}
\\[1cm]
Pietro Ursino\\
\emph{Dipartimento di Fisica e Matematica, Universit\`a dell'Insubria}\\
\emph{Via Valleggio 11, I-22100 Como, Italy.}  \\
\mbox{E-mail:} \texttt{pietro.ursino@uninsubria.it}
}
\date{}
\newtheorem{mytheorem}{Theorem}[section] 
\newtheorem{mydef}[mytheorem]{Definition} 
\newtheorem{mylemma}[mytheorem]{Lemma} 
\newtheorem{mycorollary}[mytheorem]{Corollary} 
\newtheorem{myremark}[mytheorem]{Remark} 
\newtheorem{myconj}{Conjecture} 
\newtheorem{myfact}{Fact} 
\newcommand{\GE}{\mathrm{GE}}
\newcommand{\COMMENT}[1]{}
\newcommand{\inters}[2]{#1\cap #2\neq\emptyset}
\newcommand{\defAs}{\coloneqq}
\newcommand{\nats}{\omega}
\newcommand{\false}{{\bf f}}
\newcommand{\true}{{\bf t}}
\newcommand{\Not}{{\bf\neg}}
\renewcommand{\And}{\wedge}
\newcommand{\Or}{\vee}
\renewcommand{\implies}{\rightarrow}
\newcommand{\biimplies}{\leftrightarrow}
\newcommand{\Pow}{{\mathscr{P}}}
\newcommand{\pow}[1]{\Pow({#1})}
\newcommand{\powAst}{\Pow^{\ast}}
\newcommand{\powast}[1]{\powAst({#1})}
\newcommand{\rk}{\hbox{\sf rk}\;}
\newcommand{\Places}{\mathcal{P}}
\newcommand{\TARGETS}{\mathcal{T}}
\newcommand{\REDS}{\mathcal{R}}
\newcommand{\POWNODES}{\mathcal{Q}_{\sigma^*}}
\def\eod {{\unskip\nobreak\hfil\penalty50
\hskip2em\hbox{}\nobreak\hfil $\Box$
\parfillskip=0pt \finalhyphendemerits=0 \par \medskip}}
\def \N {{\mathbb N}}
\newcommand{\mypsi}{\Phi}
\newcommand{\boldcalV}{\mbox{\boldmath$\mathcal{V}$}}
\newcommand{\Finite}{\mathit{Finite}}
\begin{document}

\maketitle

\newcommand{\tbd}{\centerline{------TO BE DONE------}}
\newcommand{\MLSP}{\textnormal{\textsf{MLSP}}\xspace}
\newcommand{\MLSSP}{\textnormal{\textsf{MLSSP}}\xspace}
\newcommand{\MLSSPF}{\textnormal{\textsf{MLSSPF}}\xspace}
\newcommand{\MLSC}{\textnormal{\textsf{MLSC}}\xspace}
\newcommand{\MLSCNOTORD}{\textnormal{\textsf{MLSCNOTORD}}\xspace}
\newcommand{\MLSUC}{\textnormal{\textsf{MLSU\!$\times$}}\xspace}

\newcounter{instr}
\newcommand{\ninstr}{\refstepcounter{instr}\theinstr.}
\newcounter{instrb}
\newcommand{\ninstrb}{\refstepcounter{instrb}\textcolor{dark-gray}{\footnotesize{\theinstrb.}} \'}
\newcommand{\commentout}[1]{}
\newcommand{\minus}{\ensuremath{\mathsf{Minus}}\xspace}
\newcommand{\surplus}{\ensuremath{\mathsf{Surplus}}\xspace}
\newcommand{\MinusSurplus}{\ensuremath{\mathsf{Minus}\textsf{-}\mathsf{Surplus}}\xspace}
\newcommand{\DeltaMinus}{\Delta_{\mathrm{Minus}}}
\newcommand{\DeltaSurplus}{\Delta_{\mathrm{Surplus}}}

\newcommand{\mathcalW}[1]{\widehat{\mathcal{#1}}}


\section*{Introduction}
The core argument of Computable Set Theory is the problem of decidability of some fragments of Set Theory. The general features of this area of research have been widely settled in at least two books \cite{CFO89} and \cite{libro01}. A quite standard argument in order to set decidability of a language consists in proving a small model property that is to prove that whenever a formula of the language has model it has a model of a rank not exceeding a fixed number which depends only on the number of variables involved in the assigned formula.

In the following we deal with a particular behaviour of some set theoretic languages: the dichotomy property. This property says, roughly speaking, that whenever a formula of a fixed language is satisfiable it admits a model smaller than a fixed rank (depending only on the number of the variables of the formula) otherwise it admits an infinite model.
This property depends strongly on the expressivity of the language indeed the more a language has expressive power the more it is hard to close its models in a cage, which is the key to prove the decidability. Dichotomy is, in a sense, a medium point between small model property and undecidability since it does not assert that the model has a cage, but says that when this does not happen the model can diverge until the infinite. What it is forbidden is that admits unbound finite models without admitting an infinite model.

In the following we give a formal definition of the above discussed dichotomy property.

\noindent
Let $\mathfrak{F}$ be a fragment of set theory. For a set assignment $M$ and a formula $\varphi \in \mathfrak{F}$, we put
\[
\mathsf{dom}_{\varphi}(M) \defAs \bigcup \{Mx \sT x \in \Vars{\varphi}\}\/.
\]

\noindent
We say that a formula $\varphi \in \mathfrak{F}$ admits only finite models if for every set assignment $M$ we have
\[
M \models \varphi \qquad \Longrightarrow \qquad |\mathsf{dom}_{\varphi}(M)| < \nats \/.
\]

\begin{mydef}
A fragment $\mathfrak{F}$ of set theory is \emph{dichotomic} if there exists a map $f\colon \nats \rightarrow \nats$ (called \textsc{dichotomic map for $\mathfrak{F}$}) such that for every $\varphi \in \mathfrak{F}$ admitting only finite models and for every set assignment $M$ we have
\[
M \models \varphi \qquad \Longrightarrow \qquad \rkk\big(\mathsf{dom}_{\varphi}(M)\big) < f\big(|\Vars{\varphi}|\big)\/.
\]
\end{mydef}

The main results showed in the present paper are the following theorems

\begin{mytheorem}
The theory $\MLSP$ is dichotomic.
\end{mytheorem}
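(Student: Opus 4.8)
The plan is to establish dichotomicity in contrapositive form: I will exhibit a map $f\colon\nats\to\nats$ with the property that every finite model $M$ of a formula $\varphi\in\MLSP$ whose rank $\rkk\big(\mathsf{dom}_\varphi(M)\big)$ reaches or exceeds $f\big(|\Vars{\varphi}|\big)$ can be transformed into an \emph{infinite} model of the same $\varphi$. Granting this, a formula that admits only finite models cannot possess a model of rank $\geq f(|\Vars{\varphi}|)$, which is exactly the required bound. So everything reduces to the single implication ``tall finite model $\Rightarrow$ infinite model.''

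Write $n=|\Vars{\varphi}|$. The first move is to pass from $M$ to its \emph{Venn partition}: the sets $Mx_1,\dots,Mx_n$ carve $\mathsf{dom}_\varphi(M)$ into at most $2^n$ nonempty \emph{places}, grouping together the elements with identical membership pattern across the variables. Each literal of $\varphi$ --- Boolean, membership, or power-set --- becomes a constraint on which places are inhabited and on how $\in$ and $\pow{\cdot}$ relate these places, so that satisfaction of $\varphi$ is recast as a combinatorial condition on a structure of size bounded by $2^n$. The model is then reorganized as a \emph{formative process}, a rank-by-rank construction that introduces the nodes of each successive rank, assigns every new node to its place, and discharges the membership and power-set obligations as it goes.

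Since there are at most $2^n$ places, the \emph{state} of the process at a given rank --- recording which places are still available to receive further elements and how the outstanding power-set obligations stand --- ranges over a set whose cardinality is bounded by a function of $n$ alone. A pigeonhole argument (the analogue here of a pumping lemma for formative processes) then guarantees that, once the rank surpasses a threshold $f(n)$ fixed by this bound, some state recurs, isolating a \emph{cycle}: a segment of the process that returns the construction to an earlier state.

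The decisive step is to \emph{pump} this cycle, inserting arbitrarily many copies of the recurring segment to obtain finite models of arbitrarily large rank and, by iterating the cycle without bound, an infinite model of $\varphi$. The main obstacle is exactly the \emph{power-set} literals. Boolean and membership constraints are local to individual nodes and are preserved under duplication with little effort; but a literal $x_i=\pow{x_j}$ is global, since the value of $\pow{x_j}$ is dictated by the entire contents of $x_j$. Inserting fresh elements into $x_j$ along the pumped copies threatens to create subsets of $x_j$ with no witnessing element in $x_i$, or conversely to leave elements of $x_i$ that are no longer subsets of $x_j$. To defeat this, the cycle selected for pumping must be \emph{power-set closed}: each iteration has to reproduce, in lock-step, both the new subsets arising in $x_j$ and their matching members in $x_i$, so that every power-set relation is restored verbatim after pumping. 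Engineering such a self-reproducing cycle, and checking that the threshold $f(n)$ can indeed be chosen to depend only on the number of places and hence only on $n$, is the technical core of the proof.
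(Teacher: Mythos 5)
Your proposal follows essentially the same route as the paper's proof: the contrapositive reduction (a sufficiently tall model yields an infinite model), the passage to the partition into at most $2^{n}$ places and its formative process, a pigeonhole bound (the paper uses $2^{2^{n}}$) forcing a cycle in the $\mathcal{P}$-graph that the process traverses at least twice, and pumping that cycle once power-set closure is secured. What you call engineering a ``power-set closed'' cycle is exactly what the paper accomplishes with its $\mathsf{LocalTrash}$ procedure, which produces a $\Pow$-closed set of places (local trashes) making the cycle a pumping chain, so your plan and the paper's argument coincide in all essentials.
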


\begin{mytheorem}\label{notord}
The theory \MLSCNOTORD is not dichotomic.
\end{mytheorem}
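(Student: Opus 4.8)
The plan is to refute the existence of a dichotomic map directly, by unwinding the definition. The theory $\MLSCNOTORD$ fails to be dichotomic as soon as we can produce a \emph{single} formula $\varphi$ over a fixed, finite set of variables (or, equally well, a family $\{\varphi_n\}$ whose variable counts are uniformly bounded) such that $\varphi$ admits only finite models, yet for every $n\in\nats$ there is a set assignment $M_n$ with $M_n\models\varphi$ and $\rkk\big(\mathsf{dom}_{\varphi}(M_n)\big)\geqslant n$. Indeed, writing $v \defAs |\Vars{\varphi}|$, any candidate dichotomic map $f$ would have to satisfy $\rkk(\mathsf{dom}_{\varphi}(M))<f(v)$ for \emph{every} model $M$ of $\varphi$; choosing $n=f(v)$ then contradicts the second property. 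So the entire argument reduces to exhibiting one such $\varphi$ together with the witnessing family $\{M_n\}_{n\in\nats}$.

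The construction will be built around the Cartesian-product operator, which is the feature of $\MLSCNOTORD$ that the purely Boolean and membership fragments lack: it lets us internalise a binary relation on a set $X$ as a subset of $X\times X$ and thereby express, \emph{without quantifiers}, that a relation linearly orders $X$, that it is tied to the membership structure, and that it carries a bottom element and a successor mechanism. First I would fix a distinguished variable $X$ together with a handful of auxiliary variables coding an order relation $R$ and its successor relation, and let $\varphi$ assert that $(X,R)$ is a finite linear ``counter'': a strict total order with a least element in which $R$ agrees with $\in$ so that foundation fixes the direction of the order. The decisive feature is that $\varphi$ constrains the \emph{shape} of $(X,R)$ while placing \emph{no bound on its length}. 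Consequently, for each $n$ the assignment $M_n$ sending $X$ to a coded chain of length $n$ satisfies $\varphi$, and since each successive level of the chain raises rank by at least one we obtain $\rkk(\mathsf{dom}_{\varphi}(M_n))\geqslant n$, which is the unbounded-rank half of the reduction.

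It then remains to prove that $\varphi$ admits only finite models, and this is where I expect the real work to lie, to be isolated as a separate lemma. Well-foundedness of $R$ in one direction is free from the axiom of foundation once $R$ is tied to $\in$; the genuine difficulty is to exclude the infinite ``limit'' models --- to force $(X,R)$ to be a true finite ordinal rather than something of order type $\omega$ or $\omega+1$ --- using only the orderless product of $\MLSCNOTORD$. The intended mechanism is to have $\varphi$ declare, as an equality of Cartesian-product terms, that the successor map is a total injection of $X$ minus its top onto $X$ minus its bottom; a counting argument then shows that any infinite $R$-chain would contain a point with no $R$-predecessor in the range of the successor map, contradicting surjectivity, so $X$ must be exhausted in finitely many steps and $\mathsf{dom}_{\varphi}(M)$ is finite.

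The heart of the matter, and the step I expect to be most delicate, is calibrating this coding against the precise (orderless) semantics of the product so that it simultaneously \emph{forces finiteness} --- which is exactly what separates this construction from one that would also admit an infinite model --- and \emph{leaves the length free}, so that the rank of the domain grows without bound while $|\Vars{\varphi}|$ stays fixed. This is precisely the expressive gap between $\MLSCNOTORD$ and the dichotomic $\MLSP$: the product is strong enough to enforce ``finite but unbounded'' chains, yet too weak to support the rank-bounding that underlies dichotomy in the powerset case. Once the coding is fixed and the two lemmas --- \emph{only finite models} and \emph{models of unbounded rank} --- are established, the reduction of the first paragraph closes the proof.
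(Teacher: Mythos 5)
Your opening reduction is exactly right, and it is the same one the paper uses: it suffices to produce a single formula, over a fixed finite set of variables, that admits only finite models yet has models of unbounded rank. From there, however, the proposal has two genuine gaps. First, the formula is never actually written down, and in this fragment that is where all the difficulty lives: the product of \MLSCNOTORD is \emph{unordered}, so a subset $R$ of $X \otimes X$ is a set of unordered pairs $\{a,b\}$ and cannot by itself encode an asymmetric relation; moreover, the conditions you want to impose (totality of the order, existence of a top and a bottom, surjectivity of the successor map onto $X$ minus its bottom) are universally quantified statements with no evident quantifier-free rendering in $\cup,\setminus,\in,\subseteq,\otimes$. Deferring all of this to a final ``calibration'' step postpones precisely the part of the argument that needs to be proved.

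Second, and more seriously, the finiteness mechanism you sketch does not work. If $(X,R)$ has order type $\omega$, then $X$ has no top, the successor map is a bijection from all of $X$ onto $X$ minus its bottom, and every element other than the bottom has an $R$-predecessor; your ``point with no $R$-predecessor in the range of the successor map'' never appears, so no contradiction with surjectivity arises. Your device excludes order type $\omega+1$ but not $\omega$, which is the case you yourself flag as the one that must be excluded. The paper handles exactly this point with a different, and completely concrete, mechanism: it takes
\[
\varphi(y,z) \;\defAs\; \{\emptyset\} \otimes (\{\emptyset\} \cup y) = z \;\wedge\; y \subsetneq z\,,
\]
so that, writing $a_{0} \defAs \{\emptyset\}$ and $a_{n+1} \defAs \{\emptyset,a_{n}\}$, the first conjunct says $z$ consists of $a_{0}$ together with the ``successors'' $\{\emptyset,w\}$ of the elements $w \in y$. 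The membership-linked successor makes the unordered product sufficient (no coded relation $R$ is needed), and it is the strict inclusion $y \subsetneq z$ that kills the $\omega$-model: for $y = \{a_{n} \sT n \in \omega\}$ one gets $z = y$, violating strictness, whereas the finite initial segments $My = \{a_{\ell} \sT \ell < k\}$, $Mz = \{a_{\ell} \sT \ell \leq k\}$ survive and have rank $k+2$, unbounded in $k$. Without something playing the role of this strictness trick --- or some equally concrete exclusion of the $\omega$-chain --- your argument does not close.
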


and using this last result:

\begin{mytheorem}
  \MLSCNOTORD with disjoint unary union is undecidable
\end{mytheorem}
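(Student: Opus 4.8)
The plan is to prove undecidability by reducing Hilbert's tenth problem --- the undecidable question of whether a Diophantine equation $p(v_1,\dots,v_k)=q(v_1,\dots,v_k)$ has a solution over $\N$ --- to the satisfiability problem for \MLSCNOTORD enriched with disjoint unary union. The guiding idea is that the finite cardinalities of sets form a model of nonnegative-integer arithmetic: the Cartesian product realizes multiplication, since $\card{a\times b}=\card{a}\cdot\card{b}$, while a disjoint unary union realizes addition, since whenever the members of $x$ are pairwise disjoint we have $\card{\bigcup x}=\sum_{y\in x}\card{y}$. Thus the arithmetic of $\N$ becomes interpretable, provided we can force the relevant sets to be finite and to meet the disjointness side-conditions inside the language.

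First I would fix, for each numeric variable $v_i$, a set variable $x_i$ and read off its value as $\card{x_i}$. I would then translate each atomic arithmetic operation into a set-theoretic constraint: a product term $v_i\cdot v_j$ becomes a constraint built from $x_i\times x_j$, while a sum $v_i+v_j$ becomes $u=y\cup z$ together with a forced disjointness requirement $y\cap z=\emptyset$ and size-matching constraints pinning $\card{y}=\card{x_i}$ and $\card{z}=\card{x_j}$. Equality of two values is rendered as equinumerosity of the witnessing sets. Conjoining these constraints for a whole polynomial identity yields a formula $\Phi_{p=q}$ whose satisfiable finite models correspond exactly to solutions of $p=q$ over $\N$.

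The non-dichotomy result, \thm{notord}, is what makes the reduction go through on the finite side. Since the theory is not dichotomic, for every candidate bound there is a formula admitting only finite models yet forcing sets of arbitrarily large finite cardinality and rank; reusing the scaffolding behind that construction, I would guarantee that each $x_i$ can take an arbitrary natural-number value while the witnessing models stay finite, so that $\card{x_i}$ is a genuine element of $\N$ rather than an infinite cardinal that would collapse the arithmetic. With this in place, $\Phi_{p=q}$ is satisfiable (by a finite model) if and only if the Diophantine equation is solvable over $\N$, and since the latter is undecidable by the Matiyasevich--Robinson--Davis--Putnam theorem, so is satisfiability for the enriched theory.

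The main obstacle I anticipate is faithfully realizing addition, disjoint copies, and equinumerosity under the restriction signalled by NOTORD, namely the absence of a definable ordered pair. Ordinarily one codes disjoint summands by tagging ($\{0\}\times a$ versus $\{1\}\times b$) and codes equinumerosity by a set of ordered pairs representing a bijection; both devices are unavailable here, which is precisely why disjoint unary union is adjoined --- it lets one assemble a family of pairwise-disjoint copies and sum their sizes without tagging. The delicate work is to make the multiplication coming from the unordered product and the addition coming from disjoint unary union live on one common domain of finite cardinalities and interact correctly, while syntactically forcing both the disjointness hypotheses and finiteness via \thm{notord}. Once addition, multiplication, $0$, and $1$ are all interpreted on that domain, undecidability follows immediately.
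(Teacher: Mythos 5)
Your high-level strategy is the same one the paper follows: undecidability ultimately comes from Hilbert's tenth problem via cardinality arithmetic, with $\otimes$ supplying multiplication and the non-dichotomy construction of Theorem~\ref{notord} supplying finiteness (the paper packages this by citing \cite{CanCutPol90}, after which it suffices to express the two \emph{positive} literals $\card{x}=\card{y}$ and $\Finite(x)$ as conjunctions in the language). However, there is a genuine gap, and it sits exactly at the point you yourself flag as ``the delicate work'': you never construct the encoding of equinumerosity, nor of the size-matched disjoint summands needed for addition. Every other ingredient of your reduction --- reading numbers off as cardinalities, multiplication, forcing finiteness, invoking MRDP --- presupposes that $\card{x}=\card{y}$ can be forced by a formula, and without a concrete gadget for it the reduction simply does not go through. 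Acknowledging that ordered-pair-based bijections are unavailable and asserting that $\biguplus$ ``lets one assemble pairwise-disjoint copies'' names the obstacle; it does not overcome it.

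For comparison, here is how the paper closes that gap (Fact~\ref{factCard}). With fresh variables $y',z$ one conjoins
\[
y' = \{x\} \otimes y\,, \qquad z \subseteq x \otimes y'\,, \qquad \biguplus z = x \cup y'\,.
\]
The first literal produces a copy of $y$ of the same cardinality (the map $b \mapsto \{x,b\}$ is injective) which is automatically disjoint from $x$: if $\{x,b\} \in x$ then $x \in \{x,b\} \in x$, contradicting regularity. The remaining two literals force $z$ to be a set of unordered pairs, each meeting $x$ and $y'$ in one point apiece, such that every element of $x \cup y'$ lies in \emph{exactly one} member of $z$; that is, $z$ is a perfect matching between $x$ and $y'$, whence $\card{x} = \card{y'} = \card{y}$, and conversely any bijection yields such a $z$. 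This is precisely where $\biguplus$ does its work, replacing the unavailable tagged ordered pairs by unordered ``edges'' whose pairwise disjointness and exact covering are enforced by the disjoint unary union. Your finiteness idea is then correct and matches the paper's Fact~\ref{factFinite}: conjoin the gadget of Theorem~\ref{lemdich}, namely $z' = \{\emptyset\} \otimes (\{\emptyset\} \cup w)$ and $w \subsetneq z'$, which forces $w$ to be finite, together with $\card{x}=\card{w}$ --- but note that this last literal again requires the equinumerosity construction, so the missing gadget is doubly essential to your plan.
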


In order to deal with those kinds of problems, we largely make use of the theory of formative processes (see \cite{Urs06} , \cite{CU14} and \cite{CU17}). In particular we focus on the way in which it manages with the cycles of a P-graph, which can behave in a different way for different languages. Cycles are commonly seen as generators of infinite elements and they could be bounded in this production only by an external constraint, for example if one of the regions involved in the cycle is a component of an assignment $Mx$ of a variable $x$ which appears in the formula with literals as $Finite(x)$ or $x=\{y\}$. In this case, since the cycle has a cardinal bond, it cannot be pumped. In case of \MLSC or \MLSCNOTORD we are in presence of an internal bond as it can be seen in the proof of Theorem \ref{notord}. Indeed, still there is an infinite production of elements but they are just necessary to keep the "engine running". This phenomenon allows the creation of models of increasing rank never reaching the infinite, which in turns implies \MLSCNOTORD is not dichotomic.

\section{Syntax and semantics of the theory $\setTheory$}
\label{sse_MLSSPF}

The syntax of the quantifier-free fragment $\setTheory$ is defined as follows\footnote{By $\otimes$ we denote the unordered cartesian product.}.  The symbols of $\setTheory$ are:
\begin{itemize}
\item infinitely many set variables $x, y, z$, \ldots;

\item the constant symbol $\emptyset$;

\item the set operators $\cdot\cup\cdot$, $\cdot\cap\cdot$, $\cdot\setminus\cdot$, $\cdot\times\cdot$, $\cdot\otimes\cdot$,
$\{\cdot,\ldots,\cdot\}$, $\Pow(\cdot)$, $\bigcup (\cdot)$, $\bigcap (\cdot)$;

\item the set predicates $\cdot\subseteq\cdot$, $\cdot=\cdot$,
$\cdot\in\cdot$, $\Finite(\cdot)$.
\end{itemize}

\smallskip

The set of $\setTheory$-\textsc{terms} is the smallest collection of expressions such that:
\begin{itemize}
\item all variables and the constant $\emptyset$ are $\setTheory$-terms;

\item if $s$ and $t$ are $\setTheory$-terms, so are $s \cup t$, $s \cap
t$, $s \setminus t$, $s\times t$, $s\otimes t$, $\Pow(s)$, $\bigcup s$, and $\bigcap s$;

\item if $s_{1},\ldots,s_{n}$ are $\setTheory$-terms, so is $\{s_{1},\ldots,s_{n}\}$.
\end{itemize}

$\setTheory$-\textsc{atoms} have then the form
\[
s\subseteq t\/, \qquad s=t\/, \qquad s\in t\/, \qquad \Finite(s)\/,
\]
where $s, t$ are $\setTheory$-terms.

\begin{sloppypar}
$\setTheory$-\textsc{formulae} are propositional combinations of
$\setTheory$-atoms, by means of the usual logical connectives $\And$ (conjunction), $\Or$ (disjunction), $\Not$ (negation), $\implies$ (implication), $\biimplies$ (bi-implication), etc.
$\setTheory$-\textsc{lit\-er\-als} are $\setTheory$-atoms and their
negations.

For a $\setTheory$-formula $\mypsi$, we denote by $\Vars{\mypsi}$ the collection of set variables occurring in $\mypsi$ (similarly for $\setTheory$-terms).
\end{sloppypar}

\begin{table}[h]
\begin{center}
\begin{tabular}{ |l|c|c|c| }
\hline
\rule[-3.5mm]{0mm}{9mm}$\big\{x \cup \emptyset, \Pow\big(y \cap (z \setminus \{x\})\big)\big\} \setminus \bigcup (x \cap y)$ & $\setTheory$-term \\
\hline
\rule[-3.5mm]{0mm}{9mm}$\neg\Big(z \cup x \in \big\{x \cup \emptyset, \Pow\big(y \cap (z \setminus \{x\})\big)\big\} \rightarrow \big( x \notin \bigcap z \:\Or\: z \in \bigcup x \big)\Big)$ & $\setTheory$-formula\\
\hline\hline
\multicolumn{2}{|l|}{\rule[-3.5mm]{0mm}{9mm}$\VarsBig{\neg\Big(z \cup x \in \big\{x \cup \emptyset, \Pow\big(y \cap (z \setminus \{x\})\big)\big\} \rightarrow \big( x \notin \bigcap z \:\Or\: z \in \bigcup x \big)\Big)} = \{x,y,z\}$}\\
\hline
\end{tabular}
\end{center}
\caption{Some examples: an $\setTheory$-term, an $\setTheory$-formula, and the map $\Vars{\cdot}$.}
\end{table}

%
%

Our considerations will take place in a naive set theory which could be formalizable in the standard axiom system \ZFC, developed by Zermelo, Fraenkel, Skolem, and von Neumann (see \cite{Jec78}).  In particular, they will refer to the von Neumann standard cumulative
hierarchy of sets, a very specific model of \ZFC, and will assume the \textsc{Axiom of Regularity}.
Then semantics of $\setTheory$ is defined in the obvious way.  
A \textsc{set assignment} $M$ is any map from a collection $V$ of set variables (called the \textsc{domain of $M$}) into the universe $\boldcalV$ of all sets (in short, $M \in \boldcalV^{V}$ or $M \in \sets^{V}$).  Given set assignment $M$ over a collection of variables $V$, the \textsc{set domain of $M$} is the set $\bigcup M[V] = \bigcup_{v \in V}Mv$ and the \textsc{rank of $M$} is the ordinal
\[
\begin{array}{rcl}
\rk (M) & \defAs & \rk (\bigcup M[V])
\end{array}
\]
(so that, when $V$ is finite, $\rk (M) = \max_{v \in V} ~\rk (Mv)$).
A set assignment $M$ is \textsc{finite}, if so is its set domain.

\medskip

Let $M$ be a set assignment over a given collection $V$ of variables, and let $s,t,s_{1},\ldots,s_{n}$ be $\setTheory$-terms whose variables occur in $V$. We put, recursively,
\begin{align*}
M \emptyset & \defAs \emptyset\\
M (s \cup t) & \defAs Ms \cup Mt \\
M (s \cap t) & \defAs Ms \cap Mt \\
M (s \setminus t) & \defAs Ms \setminus Mt \\
M (s \times t) & \defAs Ms \times Mt \\
M (s \otimes t) & \defAs Ms \otimes Mt \\
M (\Pow(s)) & \defAs \Pow(Ms) \defAs \{ u \sT u \subseteq Ms \}\\
M \Big(\bigcup s\Big) & \defAs \bigcup Ms \defAs \{ u \sT u \in u' \in Ms \text{, for some } u'\}\\
M \Big(\bigcap s\Big) & \defAs \bigcap Ms \defAs \{ u \sT u \in u' \text{, for every } u'  \in Ms\} \quad \text{(provided that $Ms \neq \emptyset$)}\/.
\end{align*}
We also put
\begin{align*}
(s \in t)^{M} &\defAs \begin{cases}
\true & \text{if } Ms \in Mt\\
\false & \text{otherwise }
\end{cases}&
(s = t)^{M} &\defAs \begin{cases}
\true & \text{if } Ms = Mt\\
\false & \text{otherwise }
\end{cases}\\
(s \subseteq t)^{M} &\defAs \begin{cases}
\true & \text{if } Ms \subseteq Mt\\
\false & \text{otherwise }
\end{cases} &
(\Finite(s))^{M} &\defAs \begin{cases}
\true & \text{if $Ms$ is finite}\\
\false & \text{otherwise }
\end{cases}
\end{align*}
(where, plainly, $\true$ and $\false$ stand the truth-values \emph{true} and \emph{false}, respectively), and
\begin{align*}
(\Phi \:\And\: \Psi)^{M} &\defAs \Phi^{M} \:\And\: \Psi^{M} &
(\Phi \:\Or\: \Psi)^{M} &\defAs \Phi^{M} \:\Or\: \Psi^{M}\\
(\Phi \:\implies\: \Psi)^{M} &\defAs \Phi^{M} \:\implies\: \Psi^{M} &
(\neg \Phi)^{M} &\defAs \neg (\Phi^{M}) && \text{etc.}
\end{align*}
for all $\setTheory$-formulae $\Phi$, $\Psi$ such that $\Vars{\Phi}, \Vars{\Psi} \subseteq V$.

The set assignment $M$ is said to \textsc{satisfy} an $\setTheory$-formula $\mypsi$ if $\Phi^{M} = \true$ holds, in which case we also write $M \models \mypsi$ and say that $M$ is a \textsc{model} for $\mypsi$. If $\mypsi$ has a model, we say that $\mypsi$ is \textsc{satisfiable}; otherwise, we say that $\mypsi$ is \textsc{unsatisfiable}. If $\mypsi$ has a finite model, we say that it is \textsc{finitely satisfiable}. If $\mypsi$ has a model $M$ such that $Mx \neq My$ for all distinct variables $x,y \in \Vars{\mypsi}$, we say that it is \textsc{injectively satisfiable}. If $M' \models \mypsi$ for every set assignment $M'$ defined over $\Vars{\mypsi}$, then $\mypsi$ is said to be \textsc{true}. Two $\setTheory$-formulae $\mypsi$ and $\Psi$ are said to be \textsc{equisatisfiable} if $\mypsi$ is satisfiable if and only if so is $\Psi$.

\subsection{The decision problem for subtheories of $\setTheory$}\label{sse:reduction}
Let $\setTheory'$ be any subtheory of $\setTheory$. The \textsc{decision problem} (or \textsc{satisfiability problem}, or \textsc{satisfaction problem}) for $\setTheory'$ is the problem of establishing algorithmically whether any given $\setTheory'$-formula is satisfiable. If the decision problem for $\setTheory'$ is solvable, then $\setTheory'$ is said to be \textsc{decidable}. A \textsc{decision procedure} (or \textsc{satisfiability test}) for $\setTheory'$ is any algorithm which solves the decision problem for $\setTheory'$.
The \textsc{finite satisfiability problem} for $\setTheory'$ is the problem of establishing algorithmically whether any given $\setTheory'$-formula is finitely satisfiable. The \textsc{injective satisfiability problem} for $\setTheory'$ is the problem of establishing algorithmically whether any given $\setTheory'$-formula is injectively satisfiable.

\medskip

By making use of the disjunctive normal form, the satisfiability problem for $\setTheory$ can be  readily reduced to the same problem for conjunctions of $\setTheory$-literals. In addition, by suitably introducing fresh set variables to name subterms of the following types
\[
t_{1} \cup t_{2}, \quad t_{1} \cap t_{2}, \quad t_{1} \setminus t_{2}, \quad t_{1}\times t_{2}, \quad t_{1}\otimes t_{2}, \quad \pow{t}, \quad \bigcup t, \quad \bigcap t,
\]
where $t_{1},t_{2},t$ are $\setTheory$-terms, the satisfiability problem for $\setTheory$ can further be reduced to the satisfiability problem for conjunctions of $\setTheory$-literals of the following types
\begin{gather}\label{firstConj}
\begin{array}{l@{~~~~~~~~}l@{~~~~~~~~}l@{~~~~~~~~}l@{~~~~}l}
      x=y \cup z \/,  & x=y \cap z\/,  & x=y \setminus z\/,  & x=\{y_{1},\dots,y_{H}\}\/, & x=\pow{y} \/, \\
      x=\bigcup y \/, & x=\bigcap y \/, & x = y\/, & x \neq y \/, & s\times t \/, \\
      s\otimes t \/, & x\in y\/, & x\notin y\/, & x\subseteq y\/, & x\not\subseteq y\/,
\end{array}
\end{gather}
where $x,y,z, y_{1},\dots,y_{H}$ stand for set variables or the constant $\emptyset$.

Finally, by applying the following simplification rules
\begin{enumerate}[label=(s\arabic*)]
\item\label{simpl1} a literal of type $x = y$ can be replaced by the equivalent literal $x = y \cup y$,

\item\label{simpl2} a literal of type $x \not\subseteq y$ is equisatisfiable with $z' = x \setminus y \:\And\: z' \neq \emptyset$,

\item\label{simpl3} the constant $\emptyset$ can be eliminated by replacing it with a new variable $y_{_{\emptyset}}$ and adding the conjunct $y_{_{\emptyset}} = y_{_{\emptyset}} \setminus y_{_{\emptyset}}$,

\item\label{simpl4} a literal of type $x=y \cap z$ is equisatisfiable with $y' = y \setminus z \:\And\: x = y \setminus y'$,

\item\label{simpl5} a literal of type $x \subseteq y$ is equisatisfiable with $y = x \cup y$,

\item\label{simpl6} a literal of type $x \neq y$ is equisatisfiable with $x \in z' \:\And\: y \notin z'$,

\item\label{simpl7} a literal of type $x \notin y$ is equisatisfiable with $x \in z' \:\And\: z' = z' \setminus y$,
\end{enumerate}
(where $y'$ and $z'$ stand for fresh set variables), the satisfiability problem for $\setTheory$ can be reduced
%
%
%
to the satisfiability problem for conjunctions of $\setTheory$-atoms of the following types:
\begin{gather}\label{normConjT}
\begin{array}{c@{~~~~~~~~}c@{~~~~~~~~}c@{~~~~~~~~}c@{~~~~~~~~}c@{~~~~~~~~}c}
      x=y \cup z \/,  & x=y \setminus z\/,  & x\in y\/,  & x=\{y_{1},\dots,y_{H}\}\/,& s\otimes t \\
      s\times t \/, & x=\bigcup y \/, & x=\bigcap y \/, & x=\pow{y}
\end{array}
\end{gather}
(where $x\/,y\/,y_{1}\/,\dots,\/y_{H}$ stand for variables), which we call \textsc{normalized conjunctions} of $\setTheory$. Needless to say, working with normalized conjunctions simplifies the completeness and correctness proofs of decision procedures.

\smallskip

The reduction technique for the satisfiability problem to normalized conjunctions has been illustrated in the case of the whole theory $\setTheory$; however, it can directly be adapted to subtheories of $\setTheory$.


Notice that a satisfiability test for a subtheory $\setTheory'$ of $\setTheory$ can
also be used to decide whether any given formula $\mypsi$ in
$\setTheory'$ is \textsc{true}.
%
In fact, a formula $\mypsi$ is true if and only if its negation
$\neg\mypsi$ is unsatisfiable.

%
%

\subsubsection{Decidable fragments of set theory}\label{fragm}

Over the years, several subtheories of $\setTheory$ have been proved to be decidable. We mention in particular the theory Multi-Level Syllogistic (\MLS), which is the common kernel of most decidable fragments of set theory investigated in the field of Computable Set Theory. Specifically, \MLS is the propositional combination of atomic formulae of the following three types\footnote{As remarked above, intersection and set inclusion are easily expressible by means of literals of type (\ref{normMLS}).}
\begin{gather}\label{normMLS}
\begin{array}{c@{~~~~~~~~}c@{~~~~~~~~}c@{~~~~~~~~}c@{~~~~~~~~}c}
      x=y \cup z \/,  & x=y \setminus z\/,  & x\in y\/.
\end{array}
\end{gather}

Below we give the list of the subtheories of $\setTheory$ which we deal with in the present article:


\vspace{0.5cm}

\begin{tabular}{lll}
    \MLS: & $\cup$, $\cap$, $\setminus$, $\subseteq$,
    $=$, $\in$ & (cf. \cite{FOS80a})\\
  \MLSP: & $\cup$, $\cap$, $\setminus$,
    $\subseteq$, $=$, $\in$, $\mathscr{P}(\cdot)$
& (cf. \cite{CanFerSch85}) \\

  \MLSC: & $\cup$, $\cap$, $\setminus$, $\subseteq$,
    $=$, $\in$, $\times$ & (Open problem) \\
 \MLSCNOTORD: & $\cup$, $\cap$, $\setminus$, $\subseteq$,
    $=$, $\in$, $\otimes$ & (Open problem) \\
    \MLSSPF: & $\cup$, $\cap$, $\setminus$,
    $\subseteq$, $=$, $\in$, $\{\cdot\}$, $\mathscr{P}(\cdot)$, $\Finite(\cdot)$~~~
&
    (cf. \cite{CU14}) \\
     \textbf{\ldots} & \textbf{\ldots} & \textbf{\ldots}
\end{tabular}

\vspace{0.5cm}

\noindent The interested reader can find an extensive treatment of such results in \cite{CFO89} and \cite{libro01}.

\medskip

In several cases, the decidability of a given fragment of set theory has been shown by
proving for it a small model property.




\section{The theory $\MLSP$}

We proceed by describing some examples of the expressive power of $\MLSP$ , then we prove that it is dichotomic.

\subsection{Expressing hereditarily finite sets with $\MLSP$-conjunctions}\label{sectHF}

We show that it is possible to express hereditarily finite sets in the theory $\MLSP$, though the singleton operator is not a primitive operator of $\MLSP$.

For every set $s$, we plainly have
\[
\pow{s} = \bigcup_{s'' \subseteq s} \{ s''\}\,,
\]
where the singletons on the left-and-side are all pairwise disjoint.
Hence, in particular,

\begin{align}
\{ s \} &= \pow{s} \setminus \bigcup_{s'' \subsetneq s} \{ s''\} \notag\\
        &= \powb{ \bigcup_{s' \in s} \{ s' \} }  \setminus \bigcup_{s'' \subsetneq s} \{ s''\} \label{eq_dec}
\end{align}
From (\ref{eq_dec}) it follows that the singleton of a finite set $s$ can be expressed by a finite  expression involving the singletons of the elements and of the proper subsets of $s$, and the operators of binary union, set difference, and powerset.

The above observation suggests a way to represent each hereditarily finite set with a suitable formula of the theory $\MLSP$ involving only equality ($=$), conjunction ($\wedge$), the standard Boolean set operators $\cup$ and $\setminus$, and the powerset operator $\Pow$; thus, neither the membership relator nor the singleton operator are used.
We recall that \emph{hereditarily finite} sets are those sets that are {\em finite} and whose elements, elements of elements, etc., all are finite. We denote their collection by $\HF$. Plainly, $\HF = \mathcal{V}_{\omega}$, where $\omega = \{0,1,2,\ldots\}$ is the first infinite ordinal.\footnote{We are using here the well-known von Neumann encoding of integers, recursively defined as $0 := \emptyset$, and $i+1 := i\cup\{i\}$.}  Observe also that $\mathcal{V}_{n} \in \HF$, for every $n \in \omega$.

In our representation, we will use only variables of the form $x_{\{h\}}$, indexed by singletons of hereditarily finite sets. In addition, it will turn out that each representing formula $\varphi$ is satisfiable and also enjoys the following \emph{faithfulness condition}:
\begin{itemize}
\item if $M \models \varphi$, then for each variable $x_{\{h\}}$ occurring in $\varphi$ it must be the case that $M \, x_{\{h\}} = \{h\}$, where $h \in \HF$.
\end{itemize}

For each $h \in \HF$, we recursively define the representing $\MLSP$-formula $\varphi_{ \{ h \} }$ by putting
\begin{equation}\label{reprFormulae}
\varphi_{ \{ h \} } \defAs \begin{cases}
  \big( x_{\{ \emptyset \}} =  \powb{ x_{\{ \emptyset \}} \setminus x_{\{ \emptyset \}}} \big) & \text{if } h = \emptyset \\[.5cm]
  {\displaystyle \Bigg(\!x_{\{ h \}} = \powb{ \bigcup_{h' \in h} x_{\{ h' \}} }  \setminus \bigcup_{h'' \subsetneq h} x_{\{ h''\}}\!\Bigg) \:\wedge\: \bigwedge_{h' \in h} \varphi_{\{ h' \}} \:\wedge\: \bigwedge_{h'' \subsetneq h} \varphi_{\{ h''\}}} & \text{if } h \neq \emptyset\,.
\end{cases}
%
\end{equation}
To show that the recursive definition (\ref{reprFormulae}) is well-given, one has to exhibit a well-ordering $\prec$ of \HF such that, for $h \in \HF$, the following properties hold:
\begin{itemize}
\item[(P1)] $h' \prec h$, for every $h' \in h$, and

\item[(P2)] $h'' \prec h$, for every $h'' \subsetneq h$\,.
\end{itemize}
In particular any total ordering $\prec$ of \HF complying with the rank, i.e., such that
\[
\rk h' < \rk h \: \Longrightarrow \: h' \prec h\,,
\]
and extending the strict partial ordering $\subsetneq$ among sets of the same rank, i.e., such that
\[
[(\rk h' = \rk h) \wedge (h' \subsetneq h)] \: \Longrightarrow \: h' \prec h\,,
\]
satisfies (P1) and (P2) above. This is the case, for instance, for the ordering on \HF induced by the Ackermann encoding
  \begin{eqnarray*}
    \N(h) & = &  \sum_{h'\in h} \;2^{\N(h')}\,,
  \end{eqnarray*}
for $h \in \HF$.
Indeed, it is an easy matter to check that for each $h \in \HF$ we have
\begin{itemize}
\item $\N(h') < \N(h)$, for every $h' \in h$, and

\item $\N(h'') < \N(h)$, for every $h'' \subsetneq h$\,.
\end{itemize}

By induction on the Ackermann code $\N(h)$, it can easily be shown that each $\MLSP$-formula $\varphi_{ \{ h \} }$ is satisfiable and it also satisfies the above faithfulness condition. Hence, for every hereditarily finite set $h$, the formula
\[
x_{h} \in x_{ \{ h \}} \wedge \varphi_{ \{ h \} }
\]
faithfully expresses $h$ via the variable $x_{h}$.

Alternatively, a hereditarily finite set $h$ can also be expressed in $\MLSP$, via the variable $x_{h}$, as follows:
\[
\begin{cases}
x_{\emptyset} = x_{\emptyset} \setminus x_{\emptyset} & \text{if } h = \emptyset\\[.2cm]
\displaystyle \Big(x_{h} = \bigcup_{h' \in h} x_{\{ h' \}}\Big) \:\wedge\: \bigwedge_{h' \in h} \varphi_{\{ h' \}} & \text{otherwise}\,.
\end{cases}
\]

Finally, we observe that each $\MLSP$-formula $\varphi_{\{h\}}$ can be easily transformed into an equisatisfiable $\MLSP$-conjunction $\varphi'_{\{h\}}$ involving newly introduced variables.

\subsection{Expressing the operator $\powast{s_{1},\ldots,s_{k}}$ with $\MLSP$-conjunctions}\label{sectPowast}

Given a (possibly empty) finite list of sets $s_{1},\ldots,s_{k} \in \boldcalV$, $\powast{s_{1},\ldots,s_{k}}$ is the collection of the subsets of $s_{1} \cup \ldots \cup s_{k}$ which have non-null intersection with each of the sets $s_{1},\ldots,s_{k}$; in symbols,
\begin{equation}
    \label{powStar}
\powast{s_{1},\ldots,s_{k}} ~\defAs~ \Big\{\,s\subseteq\bigcup_{i=1}^{k} s_{i} \sT \inters{s}{s_{i}}\,, \text{ for } i=1,\ldots,k\,\Big\}\,.
\end{equation}
Thus, in particular, for the empty list $\Lambda$, we have $\powast{\Lambda} = \{\,\emptyset\,\}$. In addition, we have
\begin{equation}\label{eqPowastEmpty}
\powast{s_{1},\ldots,s_{k}} = \emptyset \: \Longleftrightarrow \: \bigvee_{i=1}^{k} s_{i} = \emptyset
\/.
\end{equation}
and
\begin{equation}
\label{powStarAsPow}
\powast{s_{1},s_{2}} = \Pow\left(s_{1} \cup s_{2}\right) \setminus \big(\Pow\left(s_{1} \setminus s_{2}\right) \cup \Pow\left(s_{2} \setminus s_{1}\right)\big)\,.
\end{equation}
The equivalence (\ref{eqPowastEmpty}) follows immediately from the very definition of $\powast{\{s_{1},\ldots,s_{k}\}}$. Concerning (\ref{powStarAsPow}), if $s \in \powast{s_{1},s_{2}}$, then plainly $s \subseteq s_{1} \cup s_{2}$, $s \not\subseteq s_{1} \setminus s_{2}$, and $s \not\subseteq s_{2} \setminus s_{1}$, so that $s \in \Pow\left(s_{1} \cup s_{2}\right) \setminus \big(\Pow\left(s_{1} \setminus s_{2}\right) \cup \Pow\left(s_{2} \setminus s_{1}\right)\big)$. Conversely, if $s \in \Pow\left(s_{1} \cup s_{2}\right) \setminus \big(\Pow\left(s_{1} \setminus s_{2}\right) \cup \Pow\left(s_{2} \setminus s_{1}\right)\big)$, then $s \subseteq s_{1} \setminus s_{2}$ and $s \cap s_{1} \neq \emptyset \neq s \cap s_{2}$, which yields, by (\ref{powStar}), $s \in \powast{s_{1},s_{2}}$. Hence, (\ref{powStarAsPow}) follows.

Equation (\ref{powStarAsPow}) readily generalizes to
\begin{equation}
\label{powStarAsPowGen}
\powast{s_{1},\ldots,s_{k}} = \Pow\left(\mathcal{S}\right) \setminus \left(\bigcup_{i=1}^{k} \Pow\left(\mathcal{S} \setminus s_{i}\right)\right)\,,
\end{equation}
where $\mathcal{S} \defAs \bigcup_{i=1}^{k}s_{i}$.

From (\ref{powStarAsPowGen}) it follows immediately that a literal of the form $x = \powast{y_{1},\ldots,y_{k}}$ can be expressed by the $\MLSP$-literal
\[
x = \Pow\left(\textstyle\bigcup_{i=1}^{k} y_{i}\right) \setminus \left(\bigcup_{j=1}^{k} \Pow\left(\textstyle\left(\bigcup_{i=1}^{k} y_{i}\right) \setminus y_{j}\right)\right)\,,
\]
which, as before, can in turn be transformed into an equisatisfiable $\MLSP$-conjunction involving newly introduced variables.
%

\subsection{The subtheory $\MLSPFIN$}
We investigate the subtheory of $\MLSP$, denoted $\MLSPFIN$, consisting of the collection of $\MLSP$-formulae which do not admit infinite models.

We provide the following semantic definition for $\MLSPFIN$.
\begin{mydef}\label{def_MLSPhat}
$\MLSPFIN := \big\{ \psi \in \MLSP \sT \psi \wedge (\bigvee_{x \in \mathrm{Vars}(\psi)} \neg \Finite(x)) \text{ \emph{is unsatisfiable}} \big\}$.\eod
\end{mydef}

At a first glance it seems that arguing from the fact that $\MLSP$ has a decidable satisfiability problem (cf.\ \cite{CanFerSch85}) we can assert that the satisfiability problem for $\MLSPFIN$ is decidable, since $\MLSPFIN\subseteq \MLSP$. Unfortunately the problem of deciding if a formula $\psi$ is or is not in $\MLSPFIN$ is not a trivial one therefore using the decision procedure of $\MLSP$ you could find out that it is, for example, satisfiable but the procedure do not tell you if $\psi$ is inside $\MLSPFIN$ or not. We summarize the preceding remark by asserting that the real decision procedure for $\MLSPFIN$ is advocated by the decision procedure for \MLSSPF (namely, $\MLSSP$ with the finiteness predicate $\Finite(\cdot)$; cf.\ \cite{CU14}).
\bigskip

Let  $\psi$ be an $\MLSP$-formula. We say that $\psi$ satisfies the \emph{ordinal condition} if
\begin{enumerate}[label=(\alph*)]
\item[($\star$)]
\quad $\big\{ \rkk(Mx) \sT x \in \Vars{\psi} \big\}$ is an ordinal, for every model $M$ of $\psi$.
\end{enumerate}
Observe that when $\big\{ \rkk(Mx) \sT x \in \Vars{\psi} \big\}$ is an ordinal, then $\big\{ \rkk(Mx) \sT x \in \Vars{\psi} \big\} = \rkk(M)$, and conversely.

\smallskip

If the ordinal condition holds for $\psi$, then $\psi \in \MLSPFIN$, since then $\rkk(M) < |\Vars{\psi}|$ holds for every model $M$ of $\psi$, and therefore
\[
\psi \wedge (\textstyle\bigvee_{x \in \mathrm{Vars}(\psi)} \neg \Finite(x))
\]
is unsatisfiable.

We claim that the ordinal condition ($\star$) is also necessary for $\psi \in \MLSPFIN$ to hold, i.e.:
\begin{myconj}\label{mainConj}
If $\psi \in \MLSPFIN$, then $\big\{ \rkk(Mx) \sT x \in \Vars{\psi} \big\}$ is an ordinal,
for every model $M$ of $\psi$.
\end{myconj}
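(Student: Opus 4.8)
The plan is to prove the contrapositive by a pumping argument on formative processes. First I would reduce to the case in which $\psi$ is a normalized $\MLSP$-conjunction: by the reduction of Section~\ref{sse:reduction} this preserves satisfiability and the property of admitting only finite models, so it suffices to treat normalized conjunctions and transfer the conclusion back by tracking the fresh variables. Assume then, for contradiction, that $\psi \in \MLSPFIN$ has a model $M$ for which $\rho := \{\rkk(Mx) \sT x \in \Vars{\psi}\}$ fails to be an ordinal. Since $\rho$ is a non-transitive set of ordinals, there is a rank $\gamma \notin \rho$ lying below some $\beta \in \rho$; fix the largest such gap $\gamma < \rkk(M)$, so that $M$ realizes values of rank above $\gamma$ while no variable value tops out at rank $\gamma$.

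Next I would attach to $M$ a formative process $\mathcal{H}$ realizing it (in the sense of \cite{Urs06,CU14,CU17}), organized by rank, and read off its \pgraph, whose nodes are the places (the Venn regions determined by $\Vars{\psi}$) and whose arcs record the membership/targeting structure created at successive ranks. The key structural step is a lemma showing that a gap in $\rho$ forces a repetition in the sequence of place-profiles of $\mathcal{H}$ across the levels straddling $\gamma$: because no variable value is anchored as maximal at rank $\gamma$, the process must climb from below $\gamma$ to above it without introducing a new top value, so by finiteness of the set of possible profiles some profile recurs. This recurrence is precisely a cycle $\cycle$ in the \pgraph, i.e.\ exactly the kind of self-sustaining configuration that the pumping machinery is designed to iterate.

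I would then apply the pumping procedures \procPumpCycle and \procPumpOneChain to insert, for each $n$, $n$ additional copies of the cyclic segment $\cycle$ into $\mathcal{H}$, obtaining formative processes $\mathcal{H}_n$ and associated models $M_n \models \psi$ with $\rkk(M_n) \to \infty$. Taking the limit of the $\mathcal{H}_n$ (equivalently, iterating $\cycle$ through $\omega$ stages) yields an infinite formative process, and hence an infinite model $M_\infty \models \psi$; then $\psi \wedge (\bigvee_{x \in \Vars{\psi}} \neg\Finite(x))$ is satisfiable, contradicting $\psi \in \MLSPFIN$ and establishing the ordinal condition.

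The main obstacle is verifying that each pumping step preserves every literal, and in particular the powerset literals $x = \Pow(y)$: inserting a new rank level must keep $M_n x = \Pow(M_n y)$, so the inserted copy of $\cycle$ has to be powerset-closed and self-reproducing, making the new level exactly the powerset of the one below it. This compatibility is precisely the feature separating $\MLSP$ from $\MLSC$ and $\MLSCNOTORD$ (cf.\ Theorem~\ref{notord}), where an internal bond prevents such faithful iteration. Hence the crux of the whole argument is a lemma guaranteeing that the cycle $\cycle$ extracted at the gap is compatible with all $\Pow$-constraints, so that its repeated insertion produces genuine formative processes that still realize $\psi$; I expect essentially all the technical difficulty to be concentrated there.
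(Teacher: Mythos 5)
Your overall architecture --- attach a formative process to a model, extract a cycle in the \pgraph, pump it, and contradict membership in $\MLSPFIN$ --- is the same family of argument the paper uses (its proof of the conjecture is the dichotomy theorem of Section~\ref{Powerset}). But there is a genuine gap in your cycle-extraction step. You claim that a gap $\gamma$ in the rank set forces a recurrence of place-profiles ``across the levels straddling $\gamma$'' by finiteness of the set of possible profiles. A pigeonhole of this kind yields a recurrence only when the stretch of levels examined is longer than the (doubly exponential) number of possible profiles; but a rank gap can perfectly well have width one --- a priori nothing rules out $\big\{\rkk(Mx) \sT x \in \Vars{\psi}\big\} = \{0,2\}$ --- in which case the straddling region consists of a couple of steps and nothing is forced to repeat. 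The paper's pigeonhole has a different trigger: it \emph{assumes} the whole formative process is longer than $k = 2^{2^{|\Vars{\psi}|}}$, which exceeds the number of nodes of the \pgraph, and that is what produces a cycle traversed at least twice. Nothing in your setup guarantees that the model witnessing the failure of the ordinal condition has a long formative process; it may be a small model. So the cycle your argument needs never materializes, and the real content of the conjecture --- that a gap, however narrow, already forces pumpability --- cannot be obtained by counting alone.

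Second, you correctly locate the crux (pumping must preserve the literals $x = \Pow(y)$), but you leave it as a hoped-for lemma, and that lemma is exactly where all the work lies in the paper. The paper runs \procLocalTrash to extract a set $LT$ of places that is $\Pow$-closed: every $\Pow$-node $B$ meeting $LT$ contains a designated ``local trash'' place inside $LT$, whose role is to absorb the surplus subsets that each pumping round necessarily creates, so that the powerset literals remain exactly satisfied; the fact that the cycle is traversed twice is then used to verify the grand-event condition $\GE(B) \geq i_0$, and these data certify that $\big\langle \mathcal{C}, i_0, \sigma, LT \big\rangle$ is a pumping chain. The infinite model is then obtained not by a bare limit of processes but by observing that $\varphi \wedge \neg \Finite(x)$ is an \MLSSPF-formula and that a pumping chain is precisely the certificate of its satisfiability in the machinery of \cite{CU14} and \cite{CU17}. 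Without a device playing the role of the local trash, inserting copies of the cycle breaks the $\Pow$-literals and your $M_n$ need not be models of $\psi$ at all; so, as it stands, the proposal is missing both the source of the cycle and the mechanism that makes pumping sound.
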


\section[$\MLSP$ is dichotomic]{$\MLSP$ is dichotomic}\label{Powerset}
The following theorem proves Conjecture \ref{mainConj}.
\begin{mytheorem}
  Let $\varphi\in MLSP$ a satisfiable formula, $M$ a model of $\varphi$ with related transitive partition $\Sigma_l$ with formative process $\varpi = \big\langle
\!\big(\Sigma_i\big)_{i\leqslant l}, (\bullet), \TARGETS,\POWNODES \big\rangle$ longer than $k=2^{2^{\vert Var(\varphi)\vert}}$. Then there exists an infinite model of $\varphi$.
\end{mytheorem}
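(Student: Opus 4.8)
The plan is to convert the excessive length of $\varpi$ into a pumpable cycle of the underlying $\Places$-graph and then to iterate that cycle, obtaining an infinite model of $\varphi$.

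\textbf{Locating a cycle.} First I would attach to each stage $\Sigma_i$ its finite \emph{signature}, namely the set of places of $\Sigma_i$ that are active (nonempty) at that stage. Since $\varphi$ has $\card{\Vars{\varphi}}$ variables, the partition has at most $2^{\card{\Vars{\varphi}}}$ places, so the number of distinct signatures is bounded by $2^{2^{\card{\Vars{\varphi}}}} = k$. As $\varpi$ is longer than $k$, the pigeonhole principle yields two stages $i < j$ with the same signature. The action-segment $(\bullet)$ running from $\Sigma_i$ to $\Sigma_j$ thus begins and ends with the same active places and therefore determines a genuine cycle $\cycle$ of the $\Places$-graph associated with $\varpi$.

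\textbf{Pumping the cycle.} Next I would show that $\cycle$ can be reinserted into $\varpi$ to produce a longer, still legitimate, formative process whose final partition again satisfies $\varphi$; this is exactly the role of the procedures \procPumpCycle and \procEnlargeFormativeProcess. The point is that, because the active places at $\Sigma_i$ and $\Sigma_j$ coincide, each replica of $\cycle$ feeds fresh elements into exactly the same places as the original, so every boolean literal $x = y \cup z$, $x = y \setminus z$ and every membership literal $x \in y$ of the normalized form of $\varphi$ is preserved by each pump. Iterating the pump $n$ times yields a finite model $M_n$ of $\varphi$ with $\rkk(M_n)$ strictly increasing in $n$.

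\textbf{The infinite model, and the main obstacle.} For the boolean and membership literals, pumping $\cycle$ through all of $\nats$ and passing to the direct limit of $M_0 \hookrightarrow M_1 \hookrightarrow \cdots$ already yields an infinite set domain on which every such literal, being preserved at each finite stage, survives in the limit. The genuine difficulty lies in the powerset literals $x = \pow{y}$. Being non-local, such a literal forces $M(\pow{y})$ to contain \emph{every} subset of $My$, whereas a naive $\nats$-limit of the finite approximations $\pow{(M_n y)}$ produces only the \emph{finite} subsets of the limiting value of $My$. The argument must therefore drive the pumping far enough --- in general transfinitely, using the powerset bookkeeping recorded in $\TARGETS$ and $\POWNODES$ --- so that the powerset node traversed by $\cycle$ ultimately generates all subsets, infinite ones included, of that limiting value. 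Checking that signature-preservation along $\cycle$ keeps this powerset bookkeeping consistent under repetition is the heart of the proof. The enabling structural fact, anticipated in the Introduction, is that $\MLSP$ imposes \emph{no internal bond} capping a cycle (in contrast with the cartesian product of \MLSCNOTORD) and, lacking both $\Finite(\cdot)$ and the singleton operator, no external bond either; consequently $\cycle$ is freely pumpable and the construction terminates in a genuine infinite model of $\varphi$.
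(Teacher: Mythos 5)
Your outline reproduces the paper's high-level strategy (find a cycle, pump it, worry about powersets), but it stops exactly where the paper's proof actually begins. You yourself write that checking the powerset bookkeeping under repetition ``is the heart of the proof'' --- and then you do not perform that check; instead you assert that \MLSP ``imposes no internal bond,'' which is a restatement of the conclusion, not an argument (the whole point of the companion result on \MLSCNOTORD is that whether a cycle is pumpable is precisely what can fail). The paper's proof supplies the missing device: the procedure \procLocalTrash, which extracts from the cycle $\mathcal{C}$ a collection $LT$ of places that is $\Pow$-closed, meaning that every $\Pow$-node $B$ meeting $LT$ contains a place of $LT$ that still receives elements at $B$'s last call (a \emph{local trash} for $B$); moreover, since the process runs around $\mathcal{C}$ at least twice, every such node $B \in \POWNODES$ has $\GE(B) \geq i_0$, where $i_0$ is the step at which the first traversal of the cycle is complete. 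These two facts certify that $\pChain = \big\langle \mathcal{C}, i_0, \sigma, LT \big\rangle$ is a \emph{pumping chain}, and the infinite model is then delivered by the pumping-chain machinery of the formative-process theory (applied to $\varphi' = \varphi \wedge \neg \Finite(x)$, which lies in \MLSSPF), rather than by a bare direct-limit construction. The local trash is exactly what answers your worry about $x = \Pow(y)$: when the cycle is pumped, the new subsets arising inside a $\Pow$-node have a designated place to be absorbed into, so no literal is disturbed. Without constructing $LT$ (or an equivalent $\Pow$-closedness certificate), your transfinite-pumping plan remains a statement of intent.

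There is also a flaw in your cycle-finding step. In a formative process places only grow, so your signatures (sets of nonempty places) form a monotone nondecreasing chain in $i$; equality of signatures at two stages is therefore cheap --- it holds across any segment that activates no new place --- and carries no cyclic information. For instance, a long segment that keeps pouring elements from a single node $A$ into a single place $\sigma \notin A$ has constant signature yet determines no cycle: a cycle of the P-graph is an alternating sequence $\langle \sigma_0, A_0, \sigma_1, A_1, \ldots, \sigma_m, A_m = A_0 \rangle$ of places and nodes linked by membership edges $\sigma_i \in A_i$ and by target edges, and the claim is that the process \emph{traverses} such a cycle twice. The bound $k = 2^{2^{|\mathrm{Vars}(\varphi)|}}$ counts nodes (sets of places), so the pigeonhole must be applied to the nodes and target edges used by the process steps, not to which places happen to be nonempty.
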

\begin{proof}
It can easily be checked that $\Sigma_l$ has $P$-graph with a cycle, $\mathcal C$, which the formative process follows at least two times.
Consider the following set of variables of $\varphi$: $$\mathcal{V}_{\cal C}=\{x\in Var(\varphi)\mid \exists \sigma\in {\mathcal C}\wedge\sigma\subseteq x \}$$
Fix $x\in\mathcal{V}_{\cal C}$.
Define a new formula  $\varphi '$: $$\varphi'=\varphi\wedge\neg Finite(x) $$

Observe that $\varphi '\in\MLSSPF$. In order to prove our thesis it is sufficient to show that $\mathcal{C}$ is a pumping cycle for $\sigma$ (see Section 5.4.2).
First we need to change our formative process in a colored formative process. With this aim in mind we consider the following procedure:

\newpage

\begin{quote}{\small
\begin{tabbing}
xx \= xx \= xx \= xx \= xx \= xx \= xx \= xx \= xx \= xx \kill
\hspace{-10pt}\textbf{procedure} $\procLocalTrash(\formProc\,,\, \pumpCycle)$;\\
\> \mycomment{$\formProc$ is a
$\Places$-process $\varpi = \big\langle
\!\big(\Sigma_\mu\big)_{\mu\leqslant\xi}, (\bullet), \TARGETS,
\POWNODES \big\rangle$ and $\pumpCycle$ is a }\\
\> \mycomment{
$\mathcal{C} = \langle
\sigma_0,A_0,\sigma_1,A_1\dotsb \sigma_m,A_m=A_0 \rangle$.}\\[3pt]
\>
LT := $\mathcal{C}_{places}$\\
Label "unchecked" all nodes\\
\>  loop (until there is a node unchecked) pick a node $B$ unchecked such that $B\cap LT\neq\emptyset$ \textbf{do}\\

\>  \> i:=LC(B)\\
\>  \> pick $\sigma$ such that $\Delta^{(i)}(\sigma_{i+1})\neq\emptyset$\\
\>  \> put $\sigma$ inside LT\\
\>  \> label $B$ checked\\
\>  \textbf{return} $LT$;\\
\hspace{-10pt}\textbf{end procedure};
\end{tabbing}
}
\end{quote}

The procedure above takes as input a finite formative process together with a simple cycle of the P-graph of the resulting transitive partition and returns a a collection of places $LT$.

Consider now the collection $LT$ of places and the following colored $\Places$-process
    $\varpi = \big\langle\!\big(\Sigma_i\big)_{i\leqslant l}, (\bullet), \TARGETS,\REDS,\POWNODES \big\rangle$ where $\REDS = \Places\setminus LT$.

By a simple check of procedure \procLocalTrash we can deduce that $LT=\closure$ is a $\Pow$-closed set of places, i.e., $LT
    \subseteq \Places \setminus \REDS$ and every $\Pow$-node $B \in
    \POWNODES$ that intersects $\closure$ has a local trash in it.
    Indeed, pick a node $B$ such that $B\cap LT\neq\emptyset$  by procedure Local Trash there is a $\sigma '\in LT$ which receives elements in the Last Call of $B$.
     We show that such a $\sigma '$ is inside LT and it is a local trash for $B$. The former is clearly fulfilled arguing from the construction of procedure Local Trash, the latter depends on the fact that all the nodes distribute all their elements, therefore for all $A$ such that $\sigma\in A$ $LC(A)\ge LC(B)$.
Since the cycle is repeated two times for all $B \in\POWNODES$ that intersects $\closure$ $\GE (B)\ge i_0$, where $i_0$ is the step in which the cycle is finished to be done.
We can summarize the above results just saying that $\pChain = \big\langle %
	\mathcal{C}, %
	i_0, %
	\sigma, %
	LT
\big\rangle$ is a pumping chain for $\sigma$.

\end{proof}

\section{The theories \MLSC and \MLSCNOTORD}

Regarding syntax and semantics of \MLSC and \MLSCNOTORD we refers to Section \ref{fragm}.

\noindent
We proceed by describing some examples of expressive power of \MLSC .

\noindent
We recall that by $\times$ we mean ordered cartesian product \'a la Kuratowski.

\subsection[Expressing hereditarily finite sets with \MLSC-conjunctions]{Expressing hereditarily finite sets with \MLSC-conjunctions}

We show that the singleton operator is expressible in the theory \MLSC in such a way to preserve rank-boundedness. Besides we show that it is possible to express hereditarily finite sets, as well.

Consider the following conjuncts in the first column (in the second column we indicate some deductions):
\begin{align*}
(\alpha) && &x \in x' \in w \in z = y' \times y'  & &\mycol{ |x'|  \leq 2 }\\
(\beta) && &x' \in y' \in w & &\mycol{ x' = \{x\} \text{ ~and~ } y' = \{x, \{x\}\} }\\
&&&& &\mycol{ w = \big\{\{x\}, \{x, \{x\}\}\big\} = (x,\{x\}) }\\
&&&& &\mycol{ z =  \{x, \{x\}\} \times  \{x, \{x\}\} }
\end{align*}

Let $\psi$ be a formula of \MLS extended with the Cartesian product, $x$ a variable occurring in $\psi$, and $x',y',w,z$ variables not occurring in $\psi$, and consider the formula
\[
\psi' \defAs \psi \And \alpha \And \beta\/.
\]
Then for each model $M$ of $\psi$, there is a unique extension $M'$ of $M$ over the variables $x',y',w,z$ such that $M'$ satisfies $\psi'$. In such a model we have
\begin{align*}
M'x' &= \{Mx\}\\
M'y' &= \{Mx, \{Mx\}\}\\
M'w  &= \big\{\{Mx\}, \{Mx, \{Mx\}\}\big\}\\
M'z  &= \{Mx, \{Mx\}\} \times  \{Mx, \{Mx\}\}\/.
\end{align*}
Besides, if the model $M$ is rank-bounded, $M'$ is so, as well.

Thus, the singleton operator can be expressed in \MLS plus Cartesian product in such a way to preserve rank-boundedness.

\bigskip

The following lemma summarizes the above discussion in a more concise way.

\begin{mylemma}\label{singl}
The singleton operator is expressible in \MLSC.
\end{mylemma}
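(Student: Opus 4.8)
The plan is to turn the construction displayed just before the lemma into the required reduction. Given an \MLSC-formula $\psi$ and a variable $x$ occurring in $\psi$ for which we want a variable naming the singleton $\{Mx\}$, I would introduce the fresh variables $x',y',w,z$ and form $\psi' \defAs \psi \And \alpha \And \beta$, where $\alpha$ and $\beta$ are the two conjuncts above and $x'$ is to play the role of $\{x\}$. The whole idea is that the values of $x',y',w,z$ are completely pinned down by the value of $x$, so that adjoining $\alpha \And \beta$ neither alters satisfiability nor rank-boundedness, while forcing $M'x' = \{Mx\}$ in every model $M'$ of $\psi'$.

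The core step is to read the Kuratowski encoding off the membership chain, using the Axiom of Regularity, which we assume throughout. Since $w \in z = y' \times y'$, the value $M'w$ is an ordered pair $(a,b) = \{\{a\},\{a,b\}\}$ with $a,b \in M'y'$. By $\alpha$ and $\beta$ both $M'x'$ and $M'y'$ are elements of $M'w$, so $\{M'x',M'y'\} \subseteq \{\{a\},\{a,b\}\}$. Regularity eliminates each degenerate configuration: if $a = b$ then $M'w = \{\{a\}\}$ forces $M'x' = M'y'$, contradicting $M'x' \in M'y'$ (from $\beta$); hence $a \neq b$ and $M'w$ has two distinct elements. Since $M'x' \in M'y'$ again rules out $M'x' = M'y'$, we get $\{M'x',M'y'\} = \{\{a\},\{a,b\}\}$. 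Of the two ways to match these up, the option $M'x' = \{a,b\}$, $M'y' = \{a\}$ would give $\{a,b\} \in \{a\}$, i.e.\ $\{a,b\} = a$, violating regularity; so $M'x' = \{a\}$ and $M'y' = \{a,b\}$, and then $M'x' \in M'y'$ means $\{a\} \in \{a,b\}$, which (again by regularity $\{a\} \neq a$) forces $b = \{a\}$. Finally $x \in x'$ gives $a = Mx$, whence $M'x' = \{Mx\}$, $M'y' = \{Mx,\{Mx\}\}$, and $M'w = (Mx,\{Mx\})$, exactly as in the displayed deductions.

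It then remains to observe that this extension $M'$ of $M$ both exists and is unique, so that $\psi'$ and $\psi$ are equisatisfiable and $x'$ faithfully names the singleton of $x$. Moreover $\rkk(M'x')$, $\rkk(M'y')$, $\rkk(M'w)$, and $\rkk(M'z)$ exceed $\rkk(Mx)$ only by a fixed finite amount, so if $M$ is rank-bounded then so is $M'$; this is the property that makes the encoding usable inside the dichotomy arguments of the preceding sections.

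The main obstacle — really the only delicate point — is the case analysis grounded in the Axiom of Regularity: one must verify that no spurious solution for $x',y',w$ survives, in particular that the two components of the Kuratowski pair cannot be swapped between $x'$ and $y'$ and that the collapsed case $a = b$ is impossible. The remaining verifications (existence and uniqueness of the extension, and the fixed rank increment) are routine.
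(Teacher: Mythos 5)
Your proposal is correct, but it is not the paper's own proof: what you have written out (existence, uniqueness, the regularity case analysis, the fixed rank increment) is precisely the construction displayed \emph{before} the lemma, with the auxiliary pair $w \in z = y' \times y'$ and with $x'$ naming $\{x\}$. The paper's proof of the lemma itself takes a more compressed route: it claims that $y = \{x\}$ is expressed by the four literals
\[
x,x' \in y', \qquad x' \neq x, \qquad y' \in y' \times y', \qquad x \in y \subsetneq y',
\]
with only $x',y'$ fresh; the intent is that $y' \in y' \times y'$ forces $y'$ to be a Kuratowski pair, hence $|y'| \le 2$, so that $y' = \{x,x'\}$, and then $x \in y \subsetneq y'$ pins down $y = \{x\}$. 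Read literally, however, that encoding is unsatisfiable under the Axiom of Regularity (which the paper assumes throughout): $y' \in y' \times y'$ gives $y' = \{\{a\},\{a,b\}\}$ for some $a,b \in y'$, whence $a = \{a\}$ or $a = \{a,b\}$, and either way $a \in a$. Moreover, merely repairing the self-reference (say, by requiring $y'$ to lie in a product of fresh variables) would not save this variant, because any element of a Kuratowski pair must itself be of the form $\{a\}$ or $\{a,b\}$, which is a genuine restriction on an arbitrary $x \in y'$. Your encoding evades both defects exactly because the pair variable $w$ and the variable $y'$ supplying its components are distinct, and because $x$ sits two membership levels below the pair (as $x \in x' \in w$), so the forced extension $M'x' = \{Mx\}$, $M'y' = \{Mx,\{Mx\}\}$, $M'w = (Mx,\{Mx\})$, $M'z = M'y' \times M'y'$ exists for every value of $Mx$ and is unique. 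The trade-off therefore comes out in your favor: the paper's version would be cheaper (two fresh variables instead of four), but as stated it does not work, while yours is sound and additionally makes explicit the fixed rank increment, hence the preservation of rank-boundedness that the surrounding arguments rely on.
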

\begin{proof}
The conjunction of the following literals expresses the literal $y = \{x\}$\/:
\[
x,x' \in y'\/, \quad
x' \neq x\/, \quad
y' \in y' \times y'\/, \quad
x \in y \subsetneq y'\/,
\]
where $x'$ and $y'$ are fresh variables.\footnote{It is not hard to see that the literals $x' \neq x$ and $y \subsetneq y'$ are expressible by conjunctions in \MLSC .}
\end{proof}

Since singletons are expressible, and also the empty set is readily expressible by the \MLS-literal $\xO = \xO \setminus \xO$, it follows immediately that every hereditarily finite set is expressible in \MLS extended with the Cartesian product (by rank-bounded formulae).
\subsection[\MLSCNOTORD is not dichotomic]{\MLSCNOTORD is not dichotomic}\label{Cartesian}
We recall that by $\otimes$ we mean not ordered cartesian product.

\begin{mytheorem}\label{lemdich}
Let $Y,Z$ be sets satisfying the following two conditions
\[
\{\emptyset\} \otimes (\{\emptyset\} \cup Y) = Z\/, \qquad Y \subsetneq Z\/.
\]
Then $\rkk(Z) < \omega$ and $|Z \setminus Y| = 1$.
\end{mytheorem}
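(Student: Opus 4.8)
The plan is to make explicit the very rigid structure that the two hypotheses impose, exploiting the fact that the unordered product $\{\emptyset\} \otimes (\{\emptyset\} \cup Y)$ can only produce sets of the form $\{\emptyset, w\}$. First I would record the basic shape of $Z$: every element of $Z$ equals $\{\emptyset, w\}$ for some $w \in \{\emptyset\} \cup Y$, so in particular each element of $Z$ contains $\emptyset$ and is therefore nonempty. Since $Y \subseteq Z$, this already forces $\emptyset \notin Y$ (otherwise $\emptyset \in Z$, which is impossible). Next I would introduce the map $\phi(w) \defAs \{\emptyset, w\}$ and verify that it is a bijection from $\{\emptyset\} \cup Y$ onto $Z$ with $\rkk(\phi(w)) = \rkk(w) + 1$; injectivity is the only mildly delicate point and follows by separating the case $w = \emptyset$ (whose image is the singleton $\{\emptyset\}$) from $w \neq \emptyset$ (whose image is a genuine two-element set).

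The heart of the argument is to show that the inclusion $Y \subseteq Z$ forces $Y$ to be an initial segment of the canonical rank-increasing chain $a_0 \defAs \emptyset$, $a_{k+1} \defAs \{\emptyset, a_k\}$, for which $\rkk(a_k) = k$. Since each $y \in Y$ lies in $Z$, it has the form $\{\emptyset, w\}$ with $w \in \{\emptyset\} \cup Y$ and $\rkk(w) = \rkk(y) - 1$; writing $g(y) \defAs w$ yields an injective, rank-strictly-decreasing map $g \colon Y \to \{\emptyset\} \cup Y$. I would then prove, by induction on $\rkk(y)$, that every $y \in Y$ has finite rank and in fact $y = a_{\rkk(y)}$: if $\rkk(y) = 1$ then $y = \{\emptyset\} = a_1$, while if $\rkk(y) > 1$ then $g(y) \in Y$ has strictly smaller rank, so by the inductive hypothesis $g(y) = a_{\rkk(y)-1}$ and hence $y = \phi(g(y)) = a_{\rkk(y)}$. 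This is the step where finiteness of the rank is actually established, and I expect it to be the main obstacle: one must notice that the ``subtract one'' behaviour of $g$, together with well-foundedness of $\in$, cannot persist into the transfinite, so no element of $Y$ can have infinite rank.

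Having shown $Y = \{a_k : k \in K\}$ with $K \defAs \{\rkk(y) : y \in Y\} \subseteq \{1,2,\ldots\}$, I would check that $K$ is downward closed among the positive integers (if $a_k \in Y$ with $k \geq 2$, then $g(a_k) = a_{k-1} \in Y$), so that either $K = \{1,\ldots,n\}$ for some $n$, or $K$ is all of $\{1,2,\ldots\}$. The latter case is ruled out precisely by the second hypothesis $Y \subsetneq Z$: if $Y = \{a_1, a_2, \ldots\}$ then $\{\emptyset\} \cup Y = \{a_0, a_1, \ldots\}$, and applying $\phi$ gives $Z = \{a_1, a_2, \ldots\} = Y$, contradicting properness. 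Hence $Y = \{a_1, \ldots, a_n\}$ is finite, so $\{\emptyset\} \cup Y = \{a_0, \ldots, a_n\}$ and $Z = \phi(\{a_0,\ldots,a_n\}) = \{a_1,\ldots,a_{n+1}\}$. From this explicit description both conclusions are immediate: $Z \setminus Y = \{a_{n+1}\}$, so $|Z \setminus Y| = 1$, and $\rkk(Z) = \rkk(a_{n+1}) + 1 = n + 2 < \omega$. The degenerate case $Y = \emptyset$ (that is, $n = 0$, with $Z = \{\{\emptyset\}\}$) is covered by the same formulas, so the proof is complete.
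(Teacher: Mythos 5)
Your proposal is correct and follows essentially the same route as the paper's own proof: both build the canonical chain $a_{0}, a_{1}, a_{2}, \ldots$ (up to an index shift), show by a rank-minimality/induction argument that every element of $Z$ lies on this chain, establish the initial-segment (downward-closure) property, and invoke the properness of $Y \subsetneq Z$ to force finiteness and $|Z \setminus Y| = 1$. Your repackaging via the explicit bijection $\phi(w) = \{\emptyset, w\}$ and its inverse $g$, and your explicit exclusion of the case $K = \{1,2,\ldots\}$, are only presentational variants of the paper's minimal-counterexample arguments.
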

\begin{proof}
Let $Y,Z$ be sets satisfying the conditions of the theorem.

Let us define recursively the sequence of sets $\{a_{n}\}_{n \in \omega}$, where
\begin{align*}
a_{0} &\defAs \{\emptyset\}\\
a_{n+1} &\defAs \{ \emptyset, a_{n} \}\/,
\end{align*}
and put $A \defAs \{ a_{n} \sT n \in \omega \}$.

To begin with, we show that $Z \subseteq A$. We proceed by contradiction. If  $Z \not\subseteq A$, let $z \in Z \setminus A$ of minimal rank. Since $z \in \{\emptyset\} \otimes (\{\emptyset\} \cup Y)$, then $z = \{\emptyset, y\}$, for some $y \in \{\emptyset\} \cup Y$. In fact $y \in Y$, since otherwise we would have $z = \{\emptyset\} \in A$.
Since $\rkk(y) < \rkk(z)$ and $y \in Z$, we have $y \in A$, so that $y = a_{k}$ for some $k \geq 0$. But then we would have $z = a_{k+1} \in A$, a contradiction.

Next we show that, for every $k \geq 0$,
\begin{equation}\label{myEq}
a_{k} \in Z \qquad \Longrightarrow \qquad a_{\ell} \in Y\/,  \text{ for every } 0 \leq \ell < k\/.
\end{equation}
We proceed again by contradiction, and let $\overline{k} \in \omega$ be minimal such that (\ref{myEq}) does not hold for $a_{\overline{k}}$. But then $a_{\overline{k}} = \{\emptyset, a_{\overline{k}-1}\}$, with $a_{\overline{k}-1} \in Y$. Hence, by the minimality of $\overline{k}$ it follows that $a_{k'} \in Y$, for all $1 \leq k' < \overline{k}-1$, so that (\ref{myEq}) holds for $a_{\overline{k}}$, a contradiction.

From (\ref{myEq}) it follows that $|Z \setminus Y| = 1$. Indeed, if $a_{k_{1}}, a_{k_{2}} \in Z \setminus Y$, with $k_{1} \neq k_{2}$, then, by (\ref{myEq}), it would follow that $\min (a_{k_{1}}, a_{k_{2}}) \in Y$, a contradiction.

Finally, let $Z \setminus Y = \{a_{\overline{k}}\}$. From what we have shown, it follows that
\[
Z = \{a_{i} \sT 0 \leq i \leq \overline{k} \}\/,
\]
and therefore $\rkk(Z) = \overline{k}+2 < \omega$, concluding the proof of the theorem.
\end{proof}
\begin{mycorollary}
The theory \MLSCNOTORD is not dichotomic.
\end{mycorollary}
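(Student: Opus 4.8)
The plan is to refute dichotomy by exhibiting a \emph{single} \MLSCNOTORD-formula $\varphi$, with a fixed (hence bounded) number of variables, that admits only finite models yet possesses finite models of arbitrarily large rank. Once such a $\varphi$ is in hand, non-dichotomy is immediate from the definition: for any candidate dichotomic map $f$, the value $f(|\Vars{\varphi}|)$ is a fixed natural number, and it is then exceeded by $\rkk(\mathsf{dom}_{\varphi}(M))$ for a suitable model $M$ of $\varphi$. The formula $\varphi$ will simply encode the hypotheses of \thm{lemdich}, namely $Z = \{\emptyset\}\otimes(\{\emptyset\}\cup Y)$ together with $Y \subsetneq Z$.

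First I would express the auxiliary constant $\{\emptyset\}$ inside \MLSCNOTORD. The empty set is available through $e = e\setminus e$, and the proper-inclusion literal $Y\subsetneq Z$ unfolds into $Y\subseteq Z \wedge Y\neq Z$, both legitimate in \MLSCNOTORD. To pin a variable $s$ to $\{\emptyset\}$ I would exploit the unordered product: writing $t = s\otimes s$, the literal $s\in t$ forces $1 \le |s|\le 2$ (a nonempty set belongs to its own unordered square exactly when it is an unordered pair of its own elements), and then, writing $r = t\otimes t$, the literal $t\in r$ forces $|t|\le 2$; since a two-element $s$ makes $s\otimes s$ a three-element set, these conjuncts together leave only $|s|=1$. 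Adding $e\in s$ then forces $s=\{\emptyset\}$. Collecting everything, $\varphi$ is the conjunction of $e=e\setminus e$, $t=s\otimes s$, $s\in t$, $r=t\otimes t$, $t\in r$, $e\in s$, $Z = s\otimes(s\cup Y)$, $Y\subseteq Z$, and $Y\neq Z$; it has a fixed variable set and, after the standard reduction of \S\ref{sse:reduction}, is a genuine \MLSCNOTORD-conjunction.

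Next I would invoke \thm{lemdich} to show $\varphi$ admits only finite models: in any model $s$ is forced to $\{\emptyset\}$, so $Z=\{\emptyset\}\otimes(\{\emptyset\}\cup Y)$ and $Y\subsetneq Z$, whence $\rkk(Z)<\omega$; thus $Z\in\HF$ and, since $Y\subseteq Z$ and the remaining values are built from $\emptyset$, $s$, $Z$ by $\otimes$, the whole domain $\mathsf{dom}_{\varphi}(M)$ is hereditarily finite, in particular finite. For the unbounded-rank part I would reuse the sequence $a_0 = \{\emptyset\}$, $a_{n+1} = \{\emptyset,a_n\}$ from the proof of \thm{lemdich}: for each $n$ set $Z = \{a_0,\dots,a_n\}$, $Y = \{a_0,\dots,a_{n-1}\}$, $s = \{\emptyset\}$, and the auxiliaries accordingly. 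A direct check gives $\{\emptyset\}\otimes(\{\emptyset\}\cup Y) = \{a_0,\dots,a_n\} = Z$ and $Y\subsetneq Z$, so this assignment models $\varphi$, while $\rkk(a_n) = n+1$ yields $\rkk(Z) = n+2$; since $Z\subseteq\mathsf{dom}_{\varphi}(M)$ we get $\rkk(\mathsf{dom}_{\varphi}(M))\ge n+2$. Letting $n\to\infty$ defeats every $f$, so \MLSCNOTORD is not dichotomic.

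I expect the main obstacle to be the expressibility step rather than the rank bookkeeping: one must ensure that the constant $\{\emptyset\}$ (equivalently, enough of the singleton operator) can be captured using only Boolean operations and the \emph{unordered} product $\otimes$, since the earlier singleton construction (\lem{singl}) relied on the \emph{ordered} product of \MLSC. Verifying that the cardinality constraints $s\in s\otimes s$ and $t\in t\otimes t$ interact exactly as claimed—collapsing $s$ to a one-element set with no spurious two-element solutions surviving—is the delicate point; everything else reduces to the routine translation into normalized conjunctions and the monotonicity of rank under $\subseteq$.
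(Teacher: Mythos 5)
Your proposal is correct and follows essentially the same route as the paper: both apply \thm{lemdich} to the formula $Z = \{\emptyset\}\otimes(\{\emptyset\}\cup Y) \:\And\: Y \subsetneq Z$, concluding that it admits only finite models while its models have unbounded rank. The one genuine refinement is that where the paper pins down the constant $\{\emptyset\}$ by invoking \lem{singl} (which is proved for the \emph{ordered} product of \MLSC), you give a self-contained $\otimes$-based construction --- $s \in s\otimes s$ forces $1 \le |s| \le 2$, while $t = s\otimes s$ together with $t \in t\otimes t$ rules out $|s|=2$ since then $|t|=3$ --- which correctly fills in a point the paper leaves implicit.
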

\begin{proof}

By Lemma \ref{singl} we can consider the following \MLSCNOTORD-formula
\[
\varphi(y,z) \defAs \{\emptyset\} \otimes (\{\emptyset\} \cup y) = z \: \And \: y \subsetneq z\/.
\]
By Lemma \ref{lemdich} for every set assignment $M$, we have $M \models \varphi(y,z)$ if and only if there exists a $k \in \nats$ such that
\begin{align*}
My &= \{a_{\ell} \sT \ell < k\}\\
Mz &= \{a_{\ell} \sT \ell \leq k\}\/.
\end{align*}
Thus, $\varphi(y,z)$ admits only finite models, yet we have
\[
\sup \{ \rkk(Mz) \sT M \models \varphi(y,z)\} = \nats\/.
\]
\end{proof}

\section[\MLSCNOTORD with disjoint unary union is undecidable]{\MLSCNOTORD with disjoint unary union is undecidable}

For a set $S$, we define the disjoint unary union $\biguplus S$ by putting
\[
\biguplus S \defAs \big\{ t \sT (\exists ! s)  (s \in S \: \And \: t \in s) \big\}\/.
\]

We shall show that the collection $\mathfrak{C}$ of conjunctions of literals of the following types
\begin{align}
\label{eqn:literals}
\begin{aligned}
x & \in y\/, & x &= y \cup z\/, \qquad & x &= y \setminus z\\
x & = \biguplus y\/, \qquad & x &= y \otimes z
\end{aligned}
\end{align}
has an undecidable satisfiability problem.

In view of the results in \cite{CanCutPol90}, it is sufficient to show that we can express the following \emph{positive} literals
\[
|x| = |y|\/, \qquad \Finite(x)
\]
with conjunctions in $\mathfrak{C}$. This will be done in Facts~\ref{factCard} and \ref{factFinite} below, respectively.

The following fact states that positive literals of type $|x| = |y|$ are expressible with conjunctions in $\mathfrak{C}$.
\begin{myfact}\label{factCard}
Let $\varphi$ be any conjunction in $\mathfrak{C}$, possibly involving the variables $x$ and $y$, and let $y'$ and $z$ be any variables distinct from $x$ and $y$ and not occurring in $\varphi$. Let $C(x,y,y',z)$ be the conjunction of the following three literals
\[
y' = \{x\} \otimes y\/, \qquad z \subseteq x \otimes y'\/, \qquad \biguplus z = x \cup y'\/.
\]
Then the formulae $\varphi \wedge |x| = |y|$ and $\varphi \: \And \: C(x,y,y',z)$ are equisatisfiable.
\end{myfact}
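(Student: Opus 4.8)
The plan is to prove the biconditional by showing that any model of $\varphi \wedge |x| = |y|$ extends to a model of $\varphi \wedge C(x,y,y',z)$, and conversely that any model of $\varphi \wedge C(x,y,y',z)$ already satisfies $|x|=|y|$. The key observation driving the construction is that $y' = \{x\} \otimes y$ produces a set whose elements are the unordered pairs $\{x, u\}$ as $u$ ranges over the elements of $y$; since $x$ is a single fixed set, each such pair is determined by $u$, and (provided $x \notin y$) these pairs are pairwise distinct and distinct from the elements of $y$ themselves. Thus $y'$ is a faithful ``tagged copy'' of $y$ having the same cardinality as $y$, and its elements are recognizably different from those of $x$.

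First I would handle the forward direction. Suppose $M \models \varphi$ and $|Mx| = |My|$. I would set $M'y' := \{Mx\} \otimes My$, and then exhibit a subset $z^\ast \subseteq Mx \otimes M'y'$ playing the role of a bijective matching. The idea is that $\biguplus w = \bigcup w$ holds exactly when the elements of $w$ are pairwise disjoint, so the literal $\biguplus z = x \cup y'$ forces $Mz$ to be a collection of pairwise disjoint sets whose union is $Mx \cup M'y'$. I would choose $z^\ast$ to consist of the unordered pairs (Kuratowski-style ordered pairs, since $z \subseteq x \otimes y'$) encoding a bijection between $Mx$ and $M'y'$; a fixed bijection exists precisely because $|Mx| = |My| = |M'y'|$. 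The disjointness needed for $\biguplus$ to behave like $\bigcup$ comes from the fact that a bijection's graph, read as a set of pairs, partitions $Mx \cup M'y'$ into disjoint two-element blocks (each element of $x$ paired with its image in $y'$). Setting $M'z := z^\ast$ and leaving $M'$ equal to $M$ elsewhere yields a model of $\varphi \wedge C$.

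For the converse, suppose $M' \models \varphi \wedge C(x,y,y',z)$. From $z \subseteq x \otimes y'$ and $\biguplus z = x \cup y'$ I would argue that the elements of $M'z$ are pairwise disjoint (this is what distinguishes $\biguplus$ from $\bigcup$) and exhaust $M'x \cup M'y'$; being subsets of $M'x \otimes M'y'$ that are pairwise disjoint and cover everything, they must form a perfect matching between $M'x$ and $M'y'$, whence $|M'x| = |M'y'|$. Combining this with $|M'y'| = |M'y|$, which follows from $M'y' = \{M'x\}\otimes M'y$ being a tagged copy of $M'y$, gives $|M'x| = |M'y|$, so $M' \models \varphi \wedge |x|=|y|$.

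The main obstacle I anticipate is the bookkeeping around $\biguplus$ versus $\bigcup$ and the degenerate cases. One must verify carefully that $\biguplus z = \bigcup z$ forces genuine pairwise disjointness of the elements of $M'z$ (the quantifier $\exists!$ in the definition of $\biguplus$ is exactly what encodes this), and then that a pairwise-disjoint family of Kuratowski pairs covering $M'x \cup M'y'$ is forced to be the graph of a bijection rather than some partial or redundant matching. I would also need to check the edge case $x \in My$ (which could collapse the tagging and disturb disjointness or distinctness) and the trivial case where $Mx$ or $My$ is empty, ensuring the construction of $z^\ast$ and the counting argument remain valid; these boundary checks, rather than the core bijection argument, are where the subtlety lies.
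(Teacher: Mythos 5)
Your proposal is correct and is essentially the paper's own argument in expanded form: the paper's proof consists of exactly the two facts you establish, namely that $y' = \{x\} \otimes y$ forces $x \cap y' = \emptyset$ (by regularity) and $|y'| = |y|$, after which the semantics of $\biguplus$ makes the conjuncts $z \subseteq x \otimes y'$ and $\biguplus z = x \cup y'$ say precisely that $z$ encodes a perfect matching between $x$ and $y'$. Two minor corrections to your write-up: the elements of $x \otimes y'$ are \emph{unordered} pairs (Kuratowski ordered pairs belong to $\times$, not $\otimes$), although your matching argument in fact treats them correctly as two-element sets, and your worried edge case $x \in My$ is harmless (injectivity of $u \mapsto \{x,u\}$ holds unconditionally, and $x \cap y' = \emptyset$ needs only foundation); also, the paper's proof additionally records that Lemma~\ref{singl} carries over to \MLSCNOTORD, which is what licenses writing the conjunct $y' = \{x\} \otimes y$ (with its embedded singleton) as a genuine conjunction in $\mathfrak{C}$.
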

\begin{proof}
We observe that Lemma \ref{singl} holds for \MLSCNOTORD , as well.
The thesis follows from the semantics of the operator $\biguplus$ and the validity of the following implication
\[
y' = \{x\} \otimes y \quad \Longrightarrow \quad \big( x \cap y' = \emptyset \:\And\: |y| = |y'|\big)\/.
\]
\end{proof}

The following fact states that positive literals of type $\Finite(x)$ are expressible by conjunctions in $\mathfrak{C}$.
\begin{myfact}\label{factFinite}
Let $\varphi$ be any conjunction in $\mathfrak{C}$, possibly involving the variable $x$, and let $w$, $y$, and $z$ be any variables distinct from $x$ and not occurring in $\varphi$. Let $F(x,w,y,z)$ be the conjunction of the following three literals
\[
z = \{\emptyset\} \otimes (\{\emptyset\} \cup w)\/, \qquad w \subsetneq z\/, \qquad |x| = |w|\/.
\]
Then the formulae $\varphi \And \Finite(x)$ and $\varphi \And F(x,w,y,z)$ are equisatisfiable.
\end{myfact}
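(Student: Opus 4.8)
The goal is to establish \textbf{Fact~\ref{factFinite}}, namely that $\varphi \And \Finite(x)$ and $\varphi \And F(x,w,y,z)$ are equisatisfiable, where $F$ consists of the three literals $z = \{\emptyset\} \otimes (\{\emptyset\} \cup w)$, $w \subsetneq z$, and $|x| = |w|$. The plan is to reduce everything to the structural analysis already carried out in \thm{lemdich} and to the cardinality gadget of \factCard. The key observation is that the first two conjuncts of $F$ force $w$ to be exactly the finite set $\{a_{\ell} \sT \ell < k\}$ for some $k \in \nats$ (and $z = \{a_{\ell} \sT \ell \leq k\}$), so that $w$ is \emph{necessarily finite of arbitrary prescribed size}; the third conjunct then transfers this finiteness to $x$ via the equinumerosity relation.

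First I would argue the forward direction. Suppose $M \models \varphi \And \Finite(x)$. Then $Mx$ is a finite set, say of cardinality $n$. I would set $Mw \defAs \{a_{\ell} \sT \ell < n\}$ and $Mz \defAs \{a_{\ell} \sT \ell \leq n\}$, where the $a_{\ell}$ are the sets defined in the proof of \thm{lemdich}. By the computation there, these satisfy $\{\emptyset\} \otimes (\{\emptyset\} \cup Mw) = Mz$ and $Mw \subsetneq Mz$, so the first two literals of $F$ hold. Moreover $|Mw| = n = |Mx|$, so the literal $|x| = |w|$ holds as well. Since $w,y,z$ do not occur in $\varphi$, this extension of $M$ leaves $\varphi$ satisfied, giving a model of $\varphi \And F(x,w,y,z)$.

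Conversely, suppose $M \models \varphi \And F(x,w,y,z)$. The two literals $z = \{\emptyset\} \otimes (\{\emptyset\} \cup w)$ and $w \subsetneq z$ are precisely the hypotheses of \thm{lemdich} (with $Y = Mw$, $Z = Mz$). Hence that theorem applies and gives $\rkk(Mz) < \omega$, so in particular $Mz$, and therefore its subset $Mw$, is a \emph{finite} set. The third literal $|x| = |w|$ — which by \factCard is genuinely expressible in $\mathfrak{C}$ and enforces $|Mx| = |Mw|$ — then forces $Mx$ to be finite as well, so $M \models \Finite(x)$. Since $\varphi$ does not mention $w,y,z$, the restriction of $M$ to the variables of $\varphi \And \Finite(x)$ is a model of that formula.

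The main subtlety, rather than a genuine obstacle, is bookkeeping about which literals are primitively available in $\mathfrak{C}$: the conjunct $|x| = |w|$ is not itself one of the literal types in~(\ref{eqn:literals}), so strictly one must invoke \factCard to rewrite it as the gadget $C(x,w,w',z')$ using fresh variables, and one must also check (as noted via \lem{singl}, which holds for \MLSCNOTORD) that $\{\emptyset\}$, the operator $\otimes$, and the relation $\subsetneq$ appearing in $F$ are all expressible by conjunctions in $\mathfrak{C}$. Once these expressibility facts are marshalled, the equisatisfiability is immediate from \thm{lemdich} in one direction and from the explicit construction of the $a_{\ell}$ in the other; the crux of the argument is entirely supplied by \thm{lemdich}, which pins down the models of the first two literals of $F$ as exactly the finite initial segments $\{a_{\ell}\}$ of unbounded but always finite size.
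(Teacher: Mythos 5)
Your proposal is correct and follows essentially the same route as the paper: the paper's own proof is the single line ``the thesis follows directly from Theorem~\ref{lemdich}'', and your argument is exactly that reduction, spelled out --- the explicit $a_{\ell}$-construction for the forward direction, Theorem~\ref{lemdich} (finite rank, hence finiteness of $Mz$ and so of $Mw$ and $Mx$) for the converse, plus the bookkeeping that $|x|=|w|$ must be expanded via Fact~\ref{factCard}. The extra detail you supply (including the role of the fresh variable $y$ in that expansion) is consistent with, and a useful elaboration of, what the paper leaves implicit.
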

\begin{proof}
The thesis follows directly from Theorem~1 in Section \ref{Cartesian}.
\end{proof}

\begin{myremark}
Fact~\ref{factCard} could be restated by using the standard unary union operator $\bigcup$ in place of the disjoint unary union operator $\biguplus$, but adopting the multiset semantics, rather than the standard semantics of set theory.

Yet another alternative approach to prove Fact~\ref{factCard} would be to express literals of type \linebreak$|x| = |y|$ by means of the standard unary union operator $\bigcup$ and of the predicate $\mathsf{isPartition}(\Sigma)$ expressing that $\Sigma$ is a partition, namely a collection of pairwise disjoint sets.

Finally, another alternative approach to reduce Tenth's Hilbert problem (for an extensive treatise of this problem see \cite{Mat93}) to a fragment of set theory would be to extend \MLSC with \emph{just one} occurrence of the predicate $\mathsf{isPartitionOf}(\Sigma,x)$, expressing that $\Sigma$ is a partition of $x$. Indeed, it is not hard to see that \MLSC with a single \emph{positive} conjunct of type $|x| = |y|$ allows one to encode any Diophantine equation.
\end{myremark}

%

\bibliographystyle{alpha-Springer}
{\small

}
\end{document}